\newtheorem{theorem}{Theorem}[section]
\newtheorem{corollary}[theorem]{Corollary}
\newtheorem{lemma}[theorem]{Lemma}
\newtheorem{proposition}[theorem]{Proposition}
\newtheorem{remark}[theorem]{Remark}
\newcommand{\prskip}{\vspace{8pt}} 
\newcommand{\pfskip}{\vspace{6pt}} 
\newcommand{\sectskip}{\vspace{50pt}} 
\newcommand{\introskip}{\vspace{25pt}} 
\begin{document}

\title{The Nilpotent Regular Element Problem}

\author{P. Ara}
\address{Department of Mathematics,
  Universitat Aut\`onoma de Barcelona, 08193 Bellaterra (Bar\-ce\-lona), Spain}
\email{para@mat.uab.cat}

\author{K.\ C.\ O'Meara}
\address{2901 Gough Street, Apartment 302, San Francisco, CA 94123, USA}
 \email{staf198@uclive.ac.nz}

 \date{11 November, 2015}

\thanks{The first-named author was partially supported by the grants DGI MICIIN
MTM2011-28992-C02-01 and MINECO MTM2014-53644-P}

\subjclass[2010]{Primary: 16E50, 16U99. Secondary: 16S10, 16S15}

\keywords{Nilpotent elements, von Neumann regular elements, unit-regular, Bergman's normal form.}

\maketitle
\begin{abstract}
We use George Bergman's recent normal form for universally adjoining an inner inverse to show that, for general rings, a nilpotent regular element $x$ need not be unit-regular.
This contrasts sharply with the situation for nilpotent regular elements in exchange rings (a large class of rings), and for general rings when all powers of the nilpotent  element $x$ are regular.
\end{abstract}
\vspace{5mm}

Questions concerning nilpotent elements are often central in both linear algebra and ring theory. The problem we shall consider here, of whether a nilpotent (von Neumann) regular element $x$ of a general ring $S$ must be unit-regular, may not have quite reached ``central'' status to date, although its answer was important in the first author's proof of Theorem 4 in \cite{A1} that strongly $\pi$-regular rings (in particular, algebraic algebras over fields) have stable range one. The  problem is also relevant to certain possible direct limit constructions of non-separative regular rings. (This was shown in a privately circulated note by the second author in June 2015. See \cite{A2} for a description of the fundamental Separativity Problem). Our nilpotent regular element problem  is also discussed in the forthcoming book of Lam \cite{L}. Thus it is more than just a \emph{pesky little problem} that has bothered some of us for a number of years. To settle the question, we turn to the recent description by George Bergman \cite{B1} of universally adjoining an inner inverse (quasi-inverse) of an element in an arbitrary algebra over a field. This is possibly the first application of Bergman's lovely result (but surely not the last).
\prskip

In 1996, the first author showed in Theorem 2 of \cite{A1} that nilpotent regular elements of exchange rings must be unit-regular. In 2004, Beidar, Raphael, and the second author showed
in Theorem 3.6 of \cite{B2} that in arbitrary rings, if a nilpotent element has \textbf{\emph{all}} its powers regular then it is unit-regular.
(See Chapter 4 of \cite{ATLA} for a more leisurely account of this result and how its parent result fits into linear algebra.) It is interesting to note that there are even finite-dimensional algebras in which nilpotent regular elements don't have all their powers regular (see Yu \cite{Y}). But in our case, the first possible case of a nilpotent regular element $x$ of an algebra $S$ that is not unit-regular requires $x$ to be of index at least  3 (otherwise its powers are regular) and $S$ to be  infinite-dimensional and lacking the manners of ``good'' algebras. Of course, with many problems in ring theory, if there are counter-examples, then there must be a ``free'' one, $S$.  However, without a good normal form for the members of $S$, viewing the free object as the solution can be delusional! Fortunately for us, the free object we use has such a nice normal form.
\prskip

To construct our counter-example, we apply Bergman's  normal form \cite{B1} in the following situation. Start with the algebra $R = F[x]/(x^3)$ where $x$ is an indeterminate and  $F$ is any field. We identify $x$ with its image in $R$ (whence $x$ is a nilpotent element of index 3). Next let $S = R\langle q \, | \, xqx = x, qxq = q \rangle$ be the algebra obtained from $R$ by freely adjoining a generalised inverse $q$ of $x$. We use the normal form to show that $x$ is not unit-regular in $S$.
\prskip

After we submitted an earlier version of our paper, we learned that Pace Nielsen and Janez \v{S}ter have independently (and at about the same time) also discovered an example of a nilpotent regular element that is not unit-regular. At first glance, their method appears quite different to ours: start with the algebra $R = F\langle a, b \, | \, a^2 = 0 \rangle$ and its left ideal $I = R(1 - ba)$, and  form the subalgebra $T$ of $M_2(R)$ given as
\[
    T \ = \ \left[\begin{array}{cc}
               R & I \\
               R & F+I
             \end{array}\right].
\]
By a clever argument, the authors show directly that for the elements
\[
 X \, = \, \left[\begin{array}{cc}
               a & 0 \\
               1 & 0
             \end{array}\right], \ \
   Q \, = \, \left[\begin{array}{cc}
               b & 1-ba \\
               0 & 0
             \end{array}\right]
\]
$X$ is nilpotent of index 3 and $Q$ is a generalized inverse of $X$, but $X$ is not unit-regular in $T$. See Example 3.19 in \cite{N}. Surprisingly,  the two algebras $S$ and $T$ are actually isomorphic under the correspondence $x \mapsto X,\ q \mapsto Q$. This we show in Section 3.
\prskip

The Nielsen--\v{S}ter argument is shorter than our original one, and in Section 3 we give an even shorter argument still but in a similar spirit to theirs. On the other hand, we feel our method was a \emph{surer bet}. Indeed, we quickly realised on first seeing Bergman's preprint in February 2015 that, with a fair measure of confidence, we could use Bergman's normal form to resolve the regular nilpotent element problem, \emph{one way or the other}. (Without the observation that the algebras $S$ and $T$ are isomorphic, this luxury is missing using $T$ in isolation.) Also, we hope our method is a flag-bearer for George Bergman's results on adjoining a universal inner inverse, which have the potential to be used for attacking other problems such as the fundamental Separativity Problem for regular rings.
\sectskip

\section{Preliminaries}
\introskip

We refer the reader to Goodearl's  book \cite{G1} and the upcoming book by Lam \cite{L} for background on (von Neumann) regular rings and related element-wise properties in more general general rings.  Thus, in a general ring $R$ \,(assumed associative with identity), an element $a \in R$ is \textbf{(von Neumann) regular} if there is an element $b \in R$ such that $a = aba$. Following Lam, we call any such $b$ an \textbf{inner inverse} of $a$. The more established term is ``quasi-inverse'' (and there are also competing terms for this within linear algebra), but Lam's term is perhaps more  suggestive and does not conflict with other uses of quasi-inverse in ring theory. If $a = aba$ and $b = bab$ (so that $a$ is also an inner inverse of $b$), we shall call any such $b$ a \textbf{generalised inverse} of $a$. Again, there are competing terms for this. Notice that if $b$ is an inner inverse of $a$, then $bab$ is a generalised inverse of $a$. If there exists an inner inverse $u$ of $a \in R$ that is a unit, then we say $a$ is \textbf{unit-regular} in $R$.
\prskip

Suppose $a$ is a regular element of a ring $R$ and $b$ is an inner inverse of $a$. Let $e = ab$ and $f = ba$. Then $e$ and $f$ are idempotents with $aR = eR$ and $Rf = Ra$, whence $(1 - e)R$ is a complement of $aR$ and $(1 - f)R$ is the right annihilator ideal of $a$. Unit-regularity of $a$ is equivalent to $(1 - f)R \cong (1 - e)R$ \,(``kernel isomorphic to cokernel''). See the proof of Theorem 4.1 in \cite{G1}. In turn, the latter condition is equivalent to the existence of $c \in (1 - e)R(1 - f)$ and $d \in (1 - f)R(1 - e)$ such that $cd = 1 - e$ and $dc = 1 - f$.
\prskip

Now let us introduce our ring $S$ and Bergman's normal form for its members. We start with the polynomial ring $F[x]$ in the indeterminate $x$ and over a field $F$, and take $R = F[x]/(x^3)$. Thus we may regard $R$ as the 3-dimensional algebra over $F$ containing a nilpotent element $x$ of index 3 and with a basis $\{1, x, x^2\}$. Let $S = R\langle q \, | \, xqx = x, qxq = q \rangle$ be the algebra obtained from $R$ by freely adjoining a generalised inverse $q$ of $x$.
\prskip

\begin{proposition}\label{P:normal form}
The algebra $S$ has a basis $\mathcal{A}$ (over $F$) consisting of $1$ and words (products) alternating in powers $x^i$, for $i = 1,2$, and $q^j$ for $j \ge 1$ \,(either power can begin or end) but with the restriction that a  power of $x$ or $q$ to exponent one can occur only at the beginning or end of a word. For instance, $q^3x^2q^2x$ is a basis word as described whereas $x^2q^4xq^2$ is not (without further reduction).
\end{proposition}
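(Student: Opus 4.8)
The plan is to realize $S$ as the quotient of the free algebra $F\langle x,q\rangle$ by the two-sided ideal generated by $x^3$, $xqx-x$, and $qxq-q$, and then apply Bergman's Diamond Lemma to the associated rewriting system. I would orient the relations as reductions $x^3 \to 0$, $xqx \to x$, and $qxq \to q$, and work with the degree-lexicographic order on monomials (since both generators have degree $1$, degree equals word length here). This is a two-sided monomial order with the descending chain condition, and each rule sends its left-hand side to a linear combination of strictly shorter — hence strictly smaller — monomials. Termination is therefore immediate, and the Diamond Lemma reduces the whole statement to checking that every ambiguity of the system is resolvable, in which case the irreducible monomials form an $F$-basis of $S$.

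Next I would identify the irreducible monomials and confirm that they are exactly the words $\mathcal{A}$ in the statement. A monomial is irreducible iff it contains none of $x^3$, $xqx$, $qxq$ as a consecutive subword. Writing a monomial as an alternating product of maximal $x$-blocks and $q$-blocks, ``no $x^3$'' says every $x$-block has exponent $1$ or $2$; ``no $xqx$'' forbids a $q$-block of exponent $1$ flanked on both sides by $x$'s, i.e.\ forbids an interior $q$-block of exponent $1$; and ``no $qxq$'' likewise forbids an interior $x$-block of exponent $1$. Since an interior block is by definition flanked on both sides, these conditions say precisely that every interior block has exponent $\ge 2$ (so interior $x$-blocks have exponent exactly $2$), while the two outermost blocks are unrestricted apart from the $x$-cap of $2$. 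This matches the description of $\mathcal{A}$ verbatim, with ``beginning or end'' read as the first or last block; the two sample words $q^3x^2q^2x \in \mathcal{A}$ and $x^2q^4xq^2 \notin \mathcal{A}$ confirm the reading.

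It then remains to verify that all ambiguities resolve, which is the technical heart. There are no inclusion ambiguities, since none of $x^3$, $xqx$, $qxq$ is a subword of another, so only overlap ambiguities occur, enumerated from the proper prefix/suffix matches of the left-hand sides. The self-overlaps give $x^4$, $xqxqx$, and $qxqxq$; the cross-overlaps give $xqxq$ and $qxqx$; and the interaction of $x^3$ with $xqx$ (the two rules sharing the letter $x$) gives $x^3qx$ and $xqx^3$. For each I would reduce along both branches and check they agree: $xqxqx$ reduces to $x$ either way, $xqxq$ reduces to $xq$ either way, $qxqx$ to $qx$, and $x^4$ to $0$; the delicate cases are $x^3qx$ and $xqx^3$, where one branch annihilates an $x^3$ directly while the other first applies $xqx\to x$ and so recreates an $x^3$ that must also vanish — both branches give $0$.

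The main obstacle I anticipate is exactly this last family together with the completeness of the overlap enumeration: because $x^3\to 0$ and $xqx\to x$ overlap on $x$, an unresolved ambiguity, if one existed, would most plausibly hide in the ``$0$ versus recreate-then-cancel'' branches of $x^3qx$ and $xqx^3$. Once confluence is confirmed, the Diamond Lemma yields that the irreducible monomials $\mathcal{A}$ form an $F$-basis of $S$, proving the proposition. This is the specialization to $R=F[x]/(x^3)$ of Bergman's general normal form in \cite{B1}; the explicit reduction system merely makes the verification self-contained.
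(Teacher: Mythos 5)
Your proof is correct, but it takes a genuinely different route from the paper. The paper's proof of Proposition~\ref{P:normal form} is essentially a citation: it invokes Corollary 19 (and Lemma 20) of Bergman's paper \cite{B1} on universally adjoining an inner inverse, applied to $R = F[x]/(x^3)$ with $p = x$, $B = \{x, x^2\}$ and the observation that $1 \notin pR + Rp$; the authors do not re-verify confluence themselves. You instead present $S$ as $F\langle x,q\rangle/(x^3,\,xqx-x,\,qxq-q)$ and run Bergman's 1978 Diamond Lemma on the reduction system $x^3\to 0$, $xqx\to x$, $qxq\to q$ from scratch. Your identification of the irreducible monomials with $\mathcal{A}$ is right, and your overlap analysis is essentially complete and resolves correctly in every case; the only nitpick is that the self-overlap of $x^3$ with itself contributes $x^5$ (overlap $B=x$) as well as $x^4$ (overlap $B=x^2$), though both branches of $x^5$ reduce to $0$ just as trivially. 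What your approach buys is self-containedness and elementarity: because $F[x]/(x^3)$ is a monomial algebra, the combined rewriting system has only three rules and a handful of overlaps, and the reader needs no familiarity with \cite{B1}. What the paper's approach buys is generality and economy: Bergman's Corollary 19 handles adjoining an inner inverse to an arbitrary element of an arbitrary algebra (where the relations interact with a full multiplication table and a hand-rolled confluence check would be far more delicate), and citing it keeps the paper aligned with its stated aim of showcasing that machinery. Your verification can fairly be read as an explicit, special-case reconstruction of what Bergman's general result does here.
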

\pfskip

\begin{proof} This is a direct application of Bergman's Corollary 19 of \cite{B1} (see also Lemma 20)  to the following situation. Start with the algebra $R = F[x]/(x^3)$ and the basis $B \cup \{1\}$ for $R$, where $B = B_{++} = \{x, x^2\}$. Fix  the element $p = x$ and note that $1 \notin pR + Rp$. Then the basis $\mathcal{A}$ for our algebra $S$ is that described for the algebra $R''$ in Corollary 19 but with occurrences of $p$ replaced by $x$.
\prskip

However, there is one important philosophical difference in our statement and that of Corollary 19. We have opted for a more informal statement. George Bergman's description of the basis $\mathcal{A}$ is described (very precisely, to avoid any possible ambiguities in terms such as ``words'' or ``expressions'') in terms of a certain subset $\mathcal{B}$ of the free algebra $T$ on $B \cup \{q\}$, which is then mapped faithfully to our $\mathcal{A}$ under the natural algebra homomorphism $T \rightarrow S$.
\end{proof}
\prskip

Our algebra $S$ is generated by $B \cup \{q\}$, and hence members of $S$ are linear combinations of words in this generating set, where by ``\emph{word}'' we simply mean a product of members of the generating set. Given  such an ``\emph{expression}'', to write it as a linear combination of the basis elements in $\mathcal{A}$, we apply the following \textbf{reduction rules}: repeatedly replace subwords $x^3$ by $0$, subwords $xqx$ by $x$, and subwords $qxq$ by $q$.  We need not worry about replacing a subword $uv$ for $u, v \in B$ according to the strict formalism in Corollary 19 of \cite{B1}, because in our case $uv$ is already in $B$ unless it is 0. In the former case, just leave $uv$ unchanged; in the latter, drop the word completely.
\prskip

We call the unique expression of a member of $S$ as a linear combination of basis words described in the proposition its \textbf{normal form}. This applies, in particular, to any word  in the letters $x$ and $q$. Thus the normal form of $q^2xqxq^3x^2q$ is $q^4x^2q$ \ \,(just keep replacing subwords $xqx$ by $x$, and $qxq$ by $q$). \textbf{In future, when we refer to a basis word $w$ in S, we shall implicitly assume $w$ is written in normal form.}
\prskip

From our earlier equivalent condition for unit-regularity, the following must hold:
\pfskip

\begin{proposition}\label{P:unit-regular}
If our $x \in S$ is unit-regular in $S$, then there are elements of~ $S$
 \begin{align}
\alpha  \ &= \ (1 - xq)(\sum a_iw_i)(1 - qx) \notag \\
\beta  \  &= \ (1 - qx)(\sum b_jy_j)(1 - xq), \notag
\end{align}
where the $w_i$ are distinct basis words of the form $1,q,q^2$ or $qzq$ for some (nonempty) word $z$, the $y_j$ are distinct basis words of the form $1,x,x^2$, or $xzx$, and the $a_i, b_j$ are nonzero scalars in $F$, such that
\[
 \alpha \beta = 1 - xq \ \ \ \mbox{and} \ \ \ \beta \alpha = 1 - qx.
\]
\end{proposition}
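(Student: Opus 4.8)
The plan is to start from the element-wise characterization of unit-regularity recalled at the end of the Preliminaries and then \emph{refine} the two witnessing elements, discarding those basis words that are annihilated by the flanking idempotent factors. Concretely, I would apply that characterization with $R = S$ and with the idempotents $e = ab = xq$ and $f = ba = qx$ attached to the inner inverse $b = q$ of $a = x$. Unit-regularity of $x$ then supplies elements $c \in (1-xq)S(1-qx)$ and $d \in (1-qx)S(1-xq)$ satisfying $cd = 1-xq$ and $dc = 1-qx$. Writing $c = (1-xq)s(1-qx)$ and expanding $s$ in the normal-form basis $\mathcal{A}$ of Proposition \ref{P:normal form}, I obtain $c = \sum a_i\,(1-xq)v_i(1-qx)$ with each $v_i \in \mathcal{A}$, and similarly for $d$.

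The heart of the argument is a short list of annihilation facts that follow directly from the defining relations $xqx = x$ and $qxq = q$. Since $(1-xq)x = x - xqx = 0$ (hence also $(1-xq)x^2 = 0$), left multiplication by $(1-xq)$ kills every basis word beginning with a power of $x$; since $x(1-qx) = 0$ and $x^2(1-qx)=0$, right multiplication by $(1-qx)$ kills every basis word ending in a power of $x$. Consequently $(1-xq)v_i(1-qx) = 0$ unless the basis word $v_i$ both begins and ends with $q$ (or equals $1$). A normal-form word with this property is precisely one of $1, q, q^2$, or $qzq$ with $z$ a nonempty word. Discarding the vanishing terms and renaming the surviving (distinct, nonzero-coefficient) words as $w_i$ leaves $\alpha := c = (1-xq)(\sum a_i w_i)(1-qx)$ of exactly the asserted shape; note that no re-expansion is needed, since I keep $\alpha$ in its sandwiched form rather than reducing it to normal form.

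The symmetric computation handles $d$. Here the relevant identities are $(1-qx)q = q - qxq = 0$ and $q(1-xq) = 0$, so that $(1-qx)$ on the left kills words beginning with $q$ and $(1-xq)$ on the right kills words ending with $q$. The surviving basis words are thus those beginning and ending with a power of $x$ (or equal to $1$), namely $1, x, x^2$, or $xzx$, giving $\beta := d$ in the stated form. Because $\alpha = c$ and $\beta = d$ as elements of $S$, the identities $\alpha\beta = cd = 1-xq$ and $\beta\alpha = dc = 1-qx$ are inherited without change. I expect the only point requiring real care to be the bookkeeping classification of normal-form basis words according to their first and last letters, together with the verification of the four annihilation identities; this is where the normal form of Proposition \ref{P:normal form} does the essential work, but none of it is deep.
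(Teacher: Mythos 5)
Your proposal is correct and follows essentially the same route as the paper: invoke the element-wise criterion for unit-regularity with $e = xq$, $f = qx$ to get $\alpha \in (1-xq)S(1-qx)$ and $\beta \in (1-qx)S(1-xq)$ with $\alpha\beta = 1-xq$ and $\beta\alpha = 1-qx$, expand the middle factor in the normal-form basis, and use the identities $(1-xq)x = 0 = x(1-qx)$ and $(1-qx)q = 0 = q(1-xq)$ to discard basis words beginning or ending in the wrong letter. The surviving words are exactly those of the stated shapes, just as in the paper's argument.
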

\pfskip

\begin{proof} We know that unit-regularity of $x$ requires the existence of members of $S$ of the form $\alpha = (1 - xq)u(1 - qx)$ and $\beta = (1 - qx)v(1 - xq)$ \ \,(for some $u,v \in S$) such that $\alpha \beta = 1 - xq$ and $\beta \alpha = 1 - qx$. In normal form, write $ u = \sum a_iw_i$ and $v = \sum b_jy_j$ as a linear combination of basis words. Inasmuch as $(1 - xq)x = 0 = x(1 - qx)$, a word $w_i$ that begins or ends in $x$ will be annihilated in the expansion of $\alpha$. Likewise, since $(1 - qx)q = 0 = q(1 - xq)$, any word $y_j$ that begins or ends in $q$ will be annihilated in the expansion of $\beta$. Thus we can assume that the $w_i$ and $y_j$ have the stated form.
\end{proof}
\prskip

Our strategy is to deny unit-regularity of $x$ by showing that even the equation $\alpha \beta = 1 - xq$ in Proposition \ref{P:unit-regular} is not possible (so $x \in S$ does not even have an inner inverse that is one-sided invertible). To do this, we need to examine in detail products of basis words and how certain words in the expansion of $\alpha \beta$ must occur at least twice.
\prskip

Recall that the only reductions required to put a word in letters $x, q$  in normal form are (repeated) uses of replacing a subword $x^3$ by $0$, a subword $xqx$ by $x$, and a subword $qxq$ by $q$. The product $yz$  of two basis words (in normal form) is either $0$ or is again a basis member (in normal form) after possibly one further reduction at the interface of $y$ and $z$. The product $yz$ will \emph{\textbf{involve reduction}} when $y = y'st$ and $z = sz'$, or $y = y's$ and $z = tsz'$, where $s,t$ are distinct members of $\{x,q\}$.  In either case, the reduction simply involves deleting the last letter of $y$ and the first letter of $z$. For instance, $(q^3x^2q)(xq^4x^2) = q^3x^2q^4x^2$ in normal form.  Note that once one reduction is made, no further reductions occur. For example, suppose $y = y'x$ and $z = qxz'$ with $yz \not= 0$, so that after one reduction we have $z = y'xz'$. Since $z$ is in normal, either $z' = 1$ or  $z' = xz''$ where $z''$ is in normal form. In the former case $y'xz' = y$ is in normal form, while in the latter $y'xz' = y'x^2z''$ is also in normal form.
\prskip

\noindent \textbf{Notation. For the remainder of the paper, we fix some $a_i,w_i$ and $b_j, y_j$ as in the statement of Proposition \ref{P:unit-regular}.}
\prskip

Let $\mathcal{L} = \{w_i\}$ and $\mathcal{R} = \{y_j\}$, where for convenience we won't formally introduce sets $I, J$  for the homes of the indices $i,j$. Let $\mathcal{C}$ denote the set of nonzero words expressed in normal form that
occur in the expansion of $\alpha \beta$ and begin in $q$ and end in $x$. To be clear,  by ``\emph{the expansion of $\alpha \beta$}'' we mean before one collects terms, but to simplify matters we  may as well take the product of the last $(1-qx)$ in $\alpha$ with the first $(1 - qx)$ in $\beta$ to be $1 - qx$ (it is idempotent). Then if there are $m$ terms $a_iw_i$ and $n$ terms $b_jy_j$, the formal expansion of $\alpha \beta$ involves $8mn$ terms.
\prskip

Observe that $1$ must occur as some $w_i$, say $w_1$, and as some $y_j$, say $y_1$, otherwise $1$ can't appear in $\alpha \beta = 1 -  xq$. Therefore  $ 1 - xq - qx + xq^2x$ is part of the expansion of $\alpha \beta$ because this comes from multiplying out $(1 - xq)(a_1w_1)(1)(a_1^{-1}y_1)(1 - qx)$. Hence $qx \in \mathcal{C}$. In particular, the set $\mathcal{C}$ is nonempty. Now each pair $(w,y) \in \mathcal{L} \times \mathcal{R}$ produces at most two words in the expanded $\alpha \beta$ that, after reduction, belong to $\mathcal{C}$:
\begin{align}
    wy \ \ \ & \ \ \ \ \ \mbox{which we call a \textbf{type I} word} \notag \\
    wqxy   & \ \ \ \ \ \mbox{which we call a \textbf{type II} word}. \notag
\end{align}
Some of these words may be zero, but otherwise the only exception to these two types not producing an element of $\mathcal{C}$ (again after reduction) is for the type I word $wy$ when $w = 1$ or $y = 1$.
\prskip

\begin{lemma} \label{L:types}
Here is what type I and II words look like in normal form.
\begin{enumerate}
\item A type I  word $wy$ is zero exactly when $w$ ends in $x^2q$ and $y$ begins in $x^2$.
\item A type II word $wqxy$ is zero exactly when $y$ begins in $x^2$.
\item A nonzero type I word $wy$ involves reduction exactly when $w$ ends in $xq$ and $y$ begins with $x$, or $w$ ends in $q$ and $y$ begins in $xq$. The reduced word is obtained by deleting $q$ and $x$.
\item A nonzero type II word never involves reduction.
\end{enumerate}
\end{lemma}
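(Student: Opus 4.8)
The plan is to deduce all four items from a single close reading of the one junction where the two basis words abut, using two facts already available: that a product of two normal-form basis words needs \emph{at most one} reduction at the interface, and that in any normal-form word every \emph{interior} power of $x$ or $q$ has exponent at least $2$ (exponent-one powers occur only at the two ends). I would first record what this forces about $w$ and $y$ near the junction. Writing the terminal block of $w\in\mathcal{L}$ as $q^c$ with $c\ge 1$, if $w\ne q,q^2$ then the block just before $q^c$ is interior, hence $x^2$, so $w$ ends in $x^2q^c$; in particular, for $w\in\mathcal{L}$ the phrases ``$w$ ends in $xq$'' and ``$w$ ends in $x^2q$'' describe exactly the same words (those with $c=1$ and $w\ne q$), which reconciles the wording of (1) and (3). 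Dually, writing the initial block of $y\in\mathcal{R}$ as $x^d$ with $d\in\{1,2\}$, if $y\ne x,x^2$ then the next block is interior, hence $q^e$ with $e\ge 2$, so ``$y$ begins in $xq$'' means precisely $d=1$ with $y\ne x$.

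Items (2) and (4) then fall out almost immediately. For a type II word, if $w=w'q^c$ and $y=x^dy'$ the inserted letters merge with their neighbours to give $wqxy=w'q^{c+1}x^{d+1}y'$. When $d=1$ we have $x^{d+1}=x^2$ and $c+1\ge 2$, so the result is reached purely by combining like powers and is already in normal form: none of the rules $x^3\to0$, $xqx\to x$, $qxq\to q$ is ever triggered, which is (4). The sole exception is $d=2$, where $x^{d+1}=x^3=0$, that is, exactly when $y$ begins in $x^2$; this is (2).

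For a type I word $wy$ the junction is $q^cx^d$, flanked on the left by $x^2$ exactly when $w$ ends in $x^2q$ and on the right by $q^{e}$ ($e\ge 2$) exactly when $y$ begins in $xq$. I would run through the four parity possibilities $c=1$ versus $c\ge 2$ against $d=1$ versus $d=2$, together with presence or absence of the flanking blocks. A reduction can occur only when a single $x$ or a single $q$ is wedged between two powers of the other letter: $xqx\to x$ fires when the left $x^2$ meets a single interface $q$ and an opening $x$, and $qxq\to q$ fires when a single interface $x$ is flanked by $q$'s. Among nonzero words these say precisely that $w$ ends in $xq$ with $y$ beginning in $x$, or $w$ ends in $q$ with $y$ beginning in $xq$, and in each case exactly one $q$ and one $x$ are deleted; this is (3). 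In every other configuration the junction $q^cx^d$ already alternates correctly and $wy$ is a nonzero normal-form word with no reduction. For (1), the only rule that annihilates a word is $x^3\to0$, and for a type I word this can be produced only by an $xqx\to x$ step leaving an $x$ between two further $x$'s, which forces the left $x^2$ present ($w$ ends in $x^2q$) and $y$ opening in $x^2$, yielding $x^2qx^2=x(xqx)x=x^3=0$.

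The main obstacle I expect is the overlap configuration $x^2\,q\,x\,q^{e}$ (left $x^2$ present, $c=1$, $d=1$, right $q^{\ge2}$ present), where the patterns $xqx$ and $qxq$ compete for the first reduction. Here I must verify confluence, namely that both first moves return the same nonzero normal-form word $\dots x^2q^{e}\dots$, so that (3) is unambiguous and no spurious second reduction or vanishing is introduced. The companion point requiring care is keeping ``zero'' and ``nonzero-with-reduction'' cleanly separated, so that the configuration ``$w$ ends in $x^2q$ and $y$ begins in $x^2$'' is counted by (1) and excluded from the nonzero scope of (3) rather than double-counted; once these boundary overlaps are pinned down, the rest is routine bookkeeping over the four block-parity cases.
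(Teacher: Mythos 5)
Your argument is correct and takes essentially the same approach as the paper's (very terse) proof: a case analysis at the single interface $q^cx^d$ between the two normal-form words, using the fact that interior blocks carry exponent at least $2$. You supply more detail than the paper, which spells out only item (1) and declares that (2)--(4) ``follow similarly''; in particular your explicit confluence check for the overlapping patterns $xqx$ and $qxq$ is a sound and welcome addition.
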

\pfskip

\begin{proof}
\noindent (1) The only way $wy = 0$ is when reduction takes place at the interface of $w$ and $y$, and after deleting the last letter of $w$ (it must be $q$) and the first letter of $y$ (it must be $x$),  at the new interface we are left with $x^m$ for some $m \ge 3$. Therefore, after the deletions, we must be left with $x^2$ at the end of $w$ and a single $x$ at the beginning of $y$.
\prskip

(2), (3), and (4) follow similarly.
\end{proof}
\sectskip

\section{Main result}
\introskip

Here is our main result.
\prskip

\begin{theorem} \label{T:main}
Let $F[x]$ be the polynomial ring in the indeterminate $x$ and over a field $F$, and let $R = F[x]/(x^3)$. Let $S = R\langle q \, | \, xqx = x,\, qxq = q \rangle$  be the algebra obtained from $R$ by freely adjoining a generalised inverse $q$ of $x$. Then $x$ is a nilpotent regular element of $S$ which is not unit-regular in $S$.
\end{theorem}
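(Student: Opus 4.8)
The plan is to treat the two assertions separately. That $x$ is regular and nilpotent is immediate: the defining relation $xqx = x$ exhibits $q$ as an inner inverse of $x$, and $x^3 = 0$ in $R$ (hence in $S$) makes $x$ nilpotent of index $3$. All the work is in showing $x$ is not unit-regular, and here I would argue by contradiction using the machinery already assembled. Assuming $x$ is unit-regular, Proposition \ref{P:unit-regular} supplies elements $\alpha,\beta$ of the stated form with $\alpha\beta = 1 - xq$; I would not even invoke the companion relation $\beta\alpha = 1 - qx$, aiming to contradict $\alpha\beta = 1 - xq$ by itself.

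The heart of the argument is a bookkeeping of the words in $\mathcal{C}$, the normal-form words occurring in the expansion of $\alpha\beta$ that begin in $q$ and end in $x$. First I would record the term/sign analysis. Expanding the three binomial factors $(1-xq)$, $(1-qx)$, $(1-xq)$ that sandwich each pair $a_iw_i,\ b_jy_j$ (after collapsing the two adjacent idempotents $(1-qx)$ into one), only two of the eight resulting terms can land in $\mathcal{C}$: the type I word $w_iy_j$, entering with coefficient $+a_ib_j$, and the type II word $w_iqxy_j$, entering with coefficient $-a_ib_j$; the remaining six terms begin in $xq$ or end in $xq$, hence begin in $x$ or end in $q$. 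Since the target $1-xq$ contains no word beginning in $q$ and ending in $x$, the collected coefficient of every $c\in\mathcal{C}$ must vanish, i.e.
\[
 \sum_{w_iy_j = c\ (\text{type I})} a_ib_j \ = \ \sum_{w_iqxy_j = c\ (\text{type II})} a_ib_j .
\]
On the other hand the pair $w_1 = y_1 = 1$ forces $qx\in\mathcal{C}$ with nonzero type II contribution $-a_1b_1$, so $\mathcal{C}\neq\emptyset$ and at least one of these equations has a genuinely nontrivial side. The goal is to show the whole system is inconsistent.

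For this I would run an extremal argument on word length. As there are only finitely many $w_i$ and $y_j$, the set of words actually occurring in $\mathcal{C}$ is finite, so it has a member $c$ of maximal length. The key structural input is the length gap from Lemma \ref{L:types}: a nonzero type II word $w_iqxy_j$ has length $|w_i|+|y_j|+2$ and never reduces, whereas a type I word $w_iy_j$ has length at most $|w_i|+|y_j|$. I would use this to pin down how a maximal word can be produced and to rule out its cancellation: if a maximal $c$ admitted a type I production $w_iy_j$, then—provided $y_j$ does not begin in $x^2$, so that by Lemma \ref{L:types}(2) the same pair's type II word is nonzero—that type II word would be a strictly longer element of $\mathcal{C}$, contradicting maximality. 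Thus a maximal $c$ should be forced to be of type II, and one then reads its admissible type II decompositions directly off its alternating $q$- and $x$-block structure, arriving at a single surviving scalar $\pm a_ib_j\neq0$ in violation of the vanishing of the coefficient of $c$.

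I expect the main obstacle to be exactly this final step: controlling the coincidences between the two kinds of productions of an extremal word. The degenerate case in which a pair's type II word vanishes (its right factor begins in $x^2$, Lemma \ref{L:types}(2)) blocks the naive ``promote type I to type II'' move, so one cannot simply manufacture a longer word; instead one must enumerate the splittings of the fixed word $c$ into $w_iqxy_j$ from its block structure, using that interior single powers are forbidden in normal form to constrain where a product may split. Securing a clean, genuinely unique (or provably non-cancelling) production of the extremal word—rather than merely bounding its length—is where the delicate combinatorics, and the real content of the theorem, resides.
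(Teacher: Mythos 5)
Your setup coincides with the paper's: deduce from Proposition \ref{P:unit-regular} that it suffices to contradict $\alpha\beta=1-xq$ alone, isolate the set $\mathcal{C}$ of expansion words beginning in $q$ and ending in $x$, note that for each pair $(w_i,y_j)$ only the type I word $w_iy_j$ (coefficient $+a_ib_j$) and the type II word $w_iqxy_j$ (coefficient $-a_ib_j$) can land in $\mathcal{C}$, observe $qx\in\mathcal{C}$, and run an extremal argument playing the two types against each other. The sign bookkeeping and the observation that every word of $\mathcal{C}$ must have vanishing total coefficient are correct. But there is a genuine gap exactly where you flag one, and your choice of extremal parameter makes it hard to close. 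The paper orders $\mathcal{C}$ by the left lexicographic order with $q>x$ and works with the largest word $\tau$; you take a word of maximal \emph{length}, and these are not interchangeable. First, length-maximality does not force the extremal word to be of type II: a type I production $w_iy_j$ in which $y_j$ begins in $x^2$ has vanishing companion type II word by Lemma \ref{L:types}(2), cannot be ``promoted,'' and can perfectly well realize a longest word; your plan to enumerate only the splittings $c=w_iqxy_j$ therefore omits contributions that must enter the coefficient of $c$. Second, and more seriously, a single word typically admits several productions of both types of equal length --- for instance $q^2x^2q^2x$ arises as $wqxy$ from $(q,\,xq^2x)$ and from $(q^2x^2q,\,1)$, as $wy$ without reduction from $(q^2,\,x^2q^2x)$ and from $(q^2x^2q^2,\,x)$, and as $wy$ with reduction from $(q^2x^2q,\,xq^2x)$ --- and over a general field the signed scalars attached to these can cancel. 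Length gives you no leverage to prune this list.

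The pruning is precisely the content of the paper's Lemmas \ref{lem:waystauappear} and \ref{lem:lex-order}, and both rely essentially on the lexicographic order: the recurring device is that a given production of $\tau$ forces a \emph{related} pair to produce a word strictly larger than $\tau$, and the closing move of the proof is that if $\tau=wy$ is a type I production without reduction then the type II word $wqx$ from the pair $(w,1)$ exceeds $\tau$ because $q>x$ at the first letter after $w$. That comparison fails for length, since $wqx$ can be shorter than $wy$ whenever $|y|\ge 2$. So the step you defer to the end is not a routine verification but the entire substance of the theorem, and with length as the extremal parameter it does not appear to be completable; you would need to switch to the lexicographic order (or an equivalent well-chosen total order) and then prove analogues of Lemmas \ref{lem:waystauappear} and \ref{lem:lex-order}.
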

\pfskip

We now proceed to the key elements of the proof via two lemmas. Order the set of words in $\mathcal{C}$ by the left lexicographic order, taking $q > x$. 
Then $\mathcal C$ is a finite set with a total order, so there is a largest word $\tau$ in $\mathcal C$. We need to analyze the ways in which the word $\tau$ can appear as  type I and II products coming from $\mathcal L\times \mathcal R$. These arguments usually take the form of working out what the product looks like in reduced form (and what reduction was involved) and then using the observation (from uniqueness of the normal form) that if a word $z$ in normal form is written as a product $uv$ of two words in normal form in which the product does not involve reduction, then $u$ and $v$ must be a two-part partitioning of the string $z$. Also, our arguments often play off $\tau$ occurring as a type I word (respectively, type II word) against the corresponding type II word (respectively, type I word) being bigger unless certain conditions are met.
\pfskip

\begin{lemma} \label{lem:waystauappear}
Let
\[
\tau = q^{i_1}x^2q^{i_2}x^2 \cdots q^{i_n}x^c, \ \ \ \ (n \ge 1, \, i_1 \ge 1, i_2,\cdots,i_n \ge 2, \, c \in \{1,2\})
\]
be the largest element of $\mathcal{C}$ with respect to the lexicographic order. For $\tau$ to  occur as a type I or II word (after reduction) from $(w,y) \in \mathcal{L} \times \mathcal{R}$, only the following are possible:
\begin{enumerate}
 \item $\tau$ is the type I word $wy$ with no reduction and
 \[
 w = q^{i_1}x^2q^{i_2}x^2 \cdots q^{i_r}, \ \ y = x^2q^{i_{r+1}}x^2 \cdots q^{i_n}x^c , \ \ 1 \le r \le n.
 \]
 \item  $\tau$ is the type I word $wy$ with reduction and
 \[
     w = q^{i_1}x^2 \cdots q^{i_{(r-1)}}x^2q^a, \ \ y = xq^bx^2q^{i_{r+1}}x^2 \cdots q^{i_n}x^c, \qquad  1\le r\le n,
 \]
 where $a + b - 1 = i_r, \, a \ge 1, b \ge 2$ and either $b > 2$, or $b = 2$ and $i_t > 2$ for some $t > r$.
 \item $\tau$ is the (nonzero) type II word
 $wqxy$ where either
 \[
   w = q^{i_1}x^2q^{i_2}x^2 \cdots q^{i_r-1}, \ \ y = xq^{i_{r+1}}x^2 \cdots q^{i_n}x^c, \qquad 1\le r < n ,
 \]
 and $i_t = 2$ for all $t > r$, or
 \[
   w = q^{i_1}x^2q^{i_2}x^2 \cdots q^{i_n-1}, \ \ y = x  .
 \]
  \end{enumerate}
 \end{lemma}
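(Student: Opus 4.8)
The plan is to run through the (at most) two products $wy$ and $wqxy$ that a pair $(w,y)\in\mathcal{L}\times\mathcal{R}$ contributes, in each case first deciding whether a reduction occurs at the interface and then reading $w$ and $y$ back off $\tau$. Two principles drive everything. First, by uniqueness of the normal form, if $\tau$ is realized as a product of two normal-form words \emph{without} reduction, then the factors must be a two-part partition of the string $\tau$; since $w$ ends in $q$ (or $w=1$) and $y$ begins in $x$ (or $y=1$), the only admissible split points are the $q\mid x$ interfaces of $\tau$. Second, Lemma \ref{L:types} pins down exactly when a product reduces and what the reduction deletes, so I can invert it.

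For a type I word $wy$ with no reduction, the partition principle gives at once $w=q^{i_1}x^2\cdots q^{i_r}$ and $y=x^2q^{i_{r+1}}\cdots x^c$ with $1\le r\le n$ (the split at the $r$-th $q\mid x$ interface; $r\ge 1$ and $r\le n$ because $w,y\neq1$), which is (1). For a type I word $wy$ \emph{with} reduction, Lemma \ref{L:types}(3) says the reduction deletes the last $q$ of $w$ and the first $x$ of $y$; inverting this, the only non-vanishing possibility matching $\tau$ has $w=q^{i_1}x^2\cdots q^{i_{r-1}}x^2q^a$ and $y=xq^bx^2\cdots x^c$ with $a+b-1=i_r$, where $a\ge1$ since $w$ ends in $q$ and $b\ge2$ since that $q$-block is interior to $y$; the branch with $y$ beginning in $x^2$ is killed by Lemma \ref{L:types}(1), and the branch $y=x$ collapses to a word ending in $x^2$ (handled by maximality below). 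For a type II word $wqxy$, Lemma \ref{L:types}(4) gives no reduction and (2) forces $y$ to begin in a single $x$ (or to be degenerate) for non-vanishing; matching the concatenation $wqxy$ against $\tau$ then yields the shapes in (3).

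The real content is the side conditions, and here I would use maximality of $\tau$ exactly as the remark before the lemma advertises: pit each occurrence against the companion produced by the \emph{same} pair. If $\tau=wy$ (type I) I form the companion type II word $wqxy$; if $\tau=wqxy$ (type II) I form the companion type I word $wy$; and whenever that companion is a nonzero element of $\mathcal{C}$ that is lexicographically \emph{larger} than $\tau$, maximality is contradicted and the configuration is discarded. This produces the clause ``$b>2$, or $b=2$ and some $i_t>2$'' in (2): the companion $q^{i_1}x^2\cdots x^2q^{a+1}x^2q^bx^2\cdots$ exceeds $\tau=q^{i_1}x^2\cdots x^2q^{a+b-1}x^2\cdots$ exactly in the complementary case $b=2$ with all later $i_t=2$, which is therefore excluded. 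It likewise produces ``$i_t=2$ for all $t>r$'' in (3): the companion type I word merges $q^{i_r-1}$ and $q^{i_{r+1}}$ into one longer $q$-run and so exceeds $\tau$ as soon as some later $i_t>2$. In (1) no condition survives because the companion type II word vanishes ($y$ begins in $x^2$), and in the $y=x$ branch of (3) the companion type I word is never larger.

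I expect the main obstacle to be the lexicographic bookkeeping in these comparisons rather than any conceptual difficulty. The comparisons ``telescope'' through runs of $q^2x^2$, so one must locate precisely where $\tau$ and its companion first differ and argue correctly in the prefix situation, where the shorter word (a prefix of the longer) is the smaller. Equally delicate are the boundary values---$r=n$, a one-block factor $y$, and reconciling the final block $x^c$ of $\tau$ with whether the relevant factor ends in $x^1$ or $x^2$---which is exactly where the $y=x$ branch (forcing $c=2$) must be separated cleanly from the $y$-begins-in-$xq$ branch. Once these comparisons are organized uniformly, each of (1)--(3) drops out by inverting the reduction rules and invoking the single maximality principle.
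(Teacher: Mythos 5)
Your overall strategy is the paper's: split on the shape of $y$ at the interface, invert the reduction rules of Lemma \ref{L:types} to read $w$ and $y$ off $\tau$, and derive the side conditions in (2) and (3) by pitting each occurrence of $\tau$ against the companion word (type II versus type I) produced by the \emph{same} pair and invoking maximality. Those comparisons are carried out in the paper exactly as you sketch them, including the $y=x$ branch being dominated by its type II companion $wqx^2$.

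There is, however, one genuine gap: the degenerate pairs $(w,1)$ and $(1,y)$. You dismiss $w=1$ and $y=1$ in case (1) ("because $w,y\neq 1$"), but nothing in your argument rules out $\tau$ arising as the \emph{type II} word $wqx$ from a pair $(w,1)$, or as $qxy$ from $(1,y)$; these words do begin in $q$ and end in $x$, hence lie in $\mathcal{C}$, and the first is not of any of the forms (1)--(3) (the second instance of (3) has $y=x$, not $y=1$). Your uniform same-pair principle cannot exclude them, because the companion type I word $w\cdot 1=w$ (respectively $1\cdot y=y$) ends in $q$ (respectively begins in $x$) and so is not in $\mathcal{C}$ --- there is no larger companion to compare against. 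The paper needs a separate preliminary step here: it first shows that $q\in\mathcal{L}$ and $x\in\mathcal{R}$ must hold, because the word $qx$ produced by the type II product of the pair $(1,1)$ can only be cancelled in $\alpha\beta=1-xq$ by the type I word of the pair $(q,x)$; only then can it compare the type II word of $(w,1)$ with the strictly larger type II word $wqx^2$ of the \emph{different} pair $(w,x)$ (and $qxy$ from $(1,y)$ with $q^2xy$ from $(q,y)$) to reach a contradiction with maximality. Without this cross-pair argument, the lemma's claim that "only the following are possible" is not established.
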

\pfskip

\begin{proof}  We begin by eliminating the possibility that $\tau$ could come from a pair $(w,y)$ with $w = 1$ or $y = 1$. Observe that we must have $q \in \mathcal{L}$ and $x \in \mathcal{R}$, otherwise the term $qx$, which comes from the type II product involving the pair $(1,1)$, could not be cancelled. However, $qx$ is not cancelled by any type II word coming from a pair $(w,y) \neq (1,1)$, so $qx$ must be cancelled by a type I word $wy$. If there is no reduction involved, then $w = q$ and $y = x$. However, if $w = w'q$ and $y = xy'$ involves reduction, then $qx = wy = w'y'$ implies $w' = q, \,y' = x$, whence $w = q^2$ and $y = x^2$, contradicting any reduction. Thus $q \in \mathcal{L}$ and $x \in \mathcal{R}$. Now suppose $\tau$ comes from a pair $(w,1)$, which must be the type II product $wqx$ because $\tau$ begins in $q$ and ends in $x$. However, the type II product from $(w,x)$ is $wqx^2 > \tau$, contradiction. Similarly, if $\tau$ comes from $(1,y)$, it must be the type II product $\tau = qxy$, but the pair $(q,y)$ produces the bigger type II product $q^2xy$.   Henceforth, we can assume $\tau$ comes only from pairs $(w,y)$ with $w \neq 1, y \neq 1$.
\prskip

Suppose that $(w, y)\in \mathcal L \times \mathcal R $ gives rise to $\tau$ (through a type I or II word). If $y$ begins in $x^2$, then the type II word $wqxy$ is $0$, so $\tau = wy$ is the type I word without reduction by Lemma \ref{L:types}\,(3). Therefore, we must have the form stated in (1) because $w$ ends in $q$ and $y$ begins in $x$.
\prskip

Next consider the case where $y = xy'$, where $y'$ does not start in $x$. If $y'=1$, then $wy = wx < wqxy= wqx^2$. Therefore $\tau $ can only be the type II word $wqxy$, because $\tau$ is the largest element of $\mathcal C$ in the lexicographic order. Thus
\[
(w, y) = (q^{i_1}x^2q^{i_2}x^2 \cdots q^{i_n-1},\, x)
\]
and we are in the second instance of case (3).
\prskip

It therefore suffices to consider the case where
\[
(w, y )\, = \,(w'q^a,\, xq^by'' )
 \]
with $a\ge 1$, \,$b\ge 2$, \,$w'$ not ending with $q$, and $y''$  not starting with $q$. Firstly, suppose $\tau$ occurs as the type II word  $ wqxy = w'q^{a+1}x^2q^b y''$. Then there is some $1\le r < n$ such that $a+1=i_r$,\, $w'q^{a+1}= q^{i_1}x^2 \cdots x^2q^{i_r}$, and $x^2q^by'' = x^2q^{i_{r+1}}x^2 \cdots q^{i_n}x^{c} $. We thus obtain
\[
(w, y) = (q^{i_1}x^2 \cdots q^{i_r-1}, \, xq^{i_{r+1}}x^2 \cdots q^{i_n}x^{c} ).
\]
But now observe that the type I word
\[
wy \ = \ q^{i_1}x^2 \cdots q^{i_{r-1}}x^2q^{(i_r + i_{r+1}-2)}x^2 q^{i_{r+2}} x^2 \cdots q^{i_n}x^{c},
\]
which we have written in normal form according to Lemma \ref{L:types}\,(3), will be greater than $\tau$ unless $i_t=2$ for all $t > r$. Indeed, $i_{r+1} = 2$ otherwise the exponent of the $r$\,th group of $q$'s will be greater in $wy$ than in $\tau$, and if $i_t > 2$ for some $t \ge r+2$ , the least such $t$ will give a bigger exponent of $q$'s than the matching group of $q$'s in $\tau$ (because the groupings after the $r$\,th have been pulled back one place in $wy$). Therefore to obtain $\tau$ as a type II word, we must be in the first instance of case (3).
\prskip

Secondly, if $\tau = w'q^{a+b-1}y''$ is the type I word obtained from $(w,y)$ after reduction, then there exist $1\le r \le n $ such that $a+b-1 = i_r$, \, $w'q^{a+b-1} = q^{i_1}x^2 \cdots x^2q^{i_r}$, and $y'' = x^2q^{i_{r+1}}x^2 \cdots q^{i_n}x^{c} $. Therefore
\[
(w, y)\ = \ (q^{i_1}x^2 \cdots q^{i_{r-1}}x^2q^a,\, xq^bx^2q^{i_{r+1}}x^2 \cdots q^{i_n}x^{c} ).
\]
From this $(w,y)$ we also get the type II word
\[
 wqxy \ = \ q^{i_1}x^2 \cdots q^{i_{r-1}}x^2q^{a+1}x^2q^b x^2q^{i_{r+1}}x^2 \cdots q^{i_n}x^{c}.
\]
But now observe that the latter word is greater than $\tau$ unless $a + 1 < i_r$ (whence  $b > 2$), or $a + 1 = i_r$ (whence $b = 2$) and there is some $t > r$ such that $i_t > 2$. Hence we are in case (2).  This concludes the proof of the lemma.
\end{proof}
\prskip

\begin{lemma}  \label{lem:lex-order}
The greatest element $\tau$ of $\mathcal{C}$ can occur at most once in the form {\rm(}2{\rm)} or {\rm(}3{\rm)}
 of Lemma \ref{lem:waystauappear} but not both.
\end{lemma}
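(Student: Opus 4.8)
The plan is to argue by contradiction from the maximality of $\tau$, using crucially that $\mathcal{L}$ and $\mathcal{R}$ are \emph{fixed} sets: if two different pairs produce $\tau$ in form (2) or (3) of Lemma \ref{lem:waystauappear}, then ``crossing'' their left and right halves yields a new pair in $\mathcal{L}\times\mathcal{R}$ whose type I or type II word is nonzero, begins in $q$, ends in $x$, and is strictly larger than $\tau$; since $\mathcal{C}$ is read off \emph{before} terms are collected, this word lies in $\mathcal{C}$, contradicting $\tau=\max\mathcal{C}$. Since a single pair yields at most one occurrence of $\tau$ (its type II word $wqxy$ is strictly longer than its type I word $wy$, so both cannot equal $\tau$), it suffices to show that at most one pair $(w,y)\in\mathcal{L}\times\mathcal{R}$ produces $\tau$ via form (2) or (3); this delivers both ``at most once'' and ``not both'' at once.

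First I would record the features common to forms (2) and (3): in every such production $w$ is a \emph{prefix} of $\tau$ ending in the letter $q$, and the production is determined by $w$ together with its form. A form (2) pair ends $w$ strictly inside a $q$-block, while a form (3) pair ends $w$ exactly one $q$ short of a block's end; and the only way a single $w$ could serve both a form (2) and a form (3) production is the borderline form (2) case $b=2$ with $w$ ending one $q$ short of block $r$, which requires $i_t>2$ for some $t>r$ and so directly contradicts the condition $i_t=2$ for all $t>r$ imposed in form (3). Hence distinct form (2)/(3) productions have prefixes of distinct lengths, and I may label them $(w_1,y_1),(w_2,y_2)$ with $w_1$ a proper prefix of $w_2$. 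The cross pairs $(w_1,y_2)$ and $(w_2,y_1)$ then also lie in $\mathcal{L}\times\mathcal{R}$, so every nonzero type I or II word they produce belongs to $\mathcal{C}$ and is $\le\tau$.

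The contradiction comes from two complementary mechanisms. A length count (type II words involve no reduction, by Lemma \ref{L:types}(4)) shows that $w_2qxy_1$ is strictly longer than $\tau$; and since both productions are of form (2) or (3), $y_1$ never begins in $x^2$, so by Lemma \ref{L:types}(2) this word is nonzero and hence lies in $\mathcal{C}$. When the larger pair ends $w_2$ with a single $q$ left in its block — that is, when it is a form (3) pair, or a form (2) pair with $b_2=2$ — the word $w_2qxy_1$ agrees with $\tau$ through the $x^2$ following that block, and continuing the comparison along $y_1$ gives $w_2qxy_1>\tau$. When instead the larger pair is a form (2) split leaving at least two $q$'s (so $b_2>2$), the word $w_2qxy_1$ can fall \emph{below} $\tau$ (it places an $x$ where $\tau$ still has a $q$); here I would switch to a type I cross, whose single reduction (Lemma \ref{L:types}(3)) fuses adjacent $q$-runs — for instance merging the tail of $w_1$ with the leading run $q^{b_2}$ of $y_2$ — into a run strictly longer than the matching block of $\tau$, again forcing the cross word above $\tau$.

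The main obstacle is the lexicographic bookkeeping inside these cases: one must verify that at the first position where the chosen cross word differs from $\tau$ it carries a $q$ where $\tau$ carries an $x$, or that the cross word is a proper prefix-extension of $\tau$, rather than diverging downward. The delicate sub-cases are exactly those in which the competing $q$-runs happen to have equal length, so that the comparison must be pushed several blocks further; here one leans on the fact that $y_1$ was built from the \emph{shorter} prefix $w_1$ and therefore re-inserts material $\tau$ has already passed, which eventually forces an upward divergence or a proper extension. Once these comparisons are in hand, the single contradiction covers ``two form (2)'s'', ``two form (3)'s'', and ``one of each'' simultaneously, which is precisely the assertion that $\tau$ occurs at most once in form (2) or (3) and never in both.
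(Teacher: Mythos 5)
Your overall strategy is exactly the paper's: assume two distinct pairs produce $\tau$ via form (2) or (3), form a ``cross'' pair (which lies in $\mathcal{L}\times\mathcal{R}$ since those sets are fixed), and contradict the maximality of $\tau$. The preliminary observations are also sound: each such $w$ is a prefix of $\tau$ of a length determined by the production, the borderline case where one $w$ serves both a form (2) and a form (3) production dies on the clash between ``$i_t>2$ for some $t>r$'' and ``$i_t=2$ for all $t>r$'', and the type II cross $w_2qxy_1$ is nonzero because $y_1$ never begins in $x^2$. But the substance of the lemma is the lexicographic verification, which you defer as ``the main obstacle,'' and the one concrete recipe you do commit to is wrong in an identifiable sub-case. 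Take two form (2) pairs splitting $\tau$ inside the \emph{same} block $r$, say $\tau=q^5x^2q^6x^2\cdots$, $(w_1,y_1)=(q^2,\,xq^4x^2q^6x^2\cdots)$ and $(w_2,y_2)=(q^4,\,xq^2x^2q^6x^2\cdots)$ (both legitimate form (2) shapes, the second because $i_2=6>2$). Here $b_2=2$, so your recipe prescribes the type II cross, but $w_2qxy_1=q^5x^2q^4x^2q^6x^2\cdots<\tau$ since $\tau$ carries $q^6$ where the cross carries $q^4$: the word does \emph{not} ``continue along $y_1$'' upward. And your fallback type I cross $w_1y_2$ fuses a run of length $a_1+b_2-1<a_1+b_1-1=i_r$ (because $a_1<a_2$ forces $b_1>b_2$), so it too falls \emph{below} $\tau$.

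The cross that actually works in this situation is the one the paper uses: the type I word $w_2y_1$, whose reduction fuses a run of length $a_2+b_1-1>i_r$. More generally the paper does not run a single unified mechanism keyed to how many $q$'s the larger prefix leaves in its block; it splits into ``two form (2) pairs'' (type I cross pairing the lexicographically larger $y$ with the \emph{other} pair's $w$), ``two form (3) pairs'' (type II cross), and the mixed case (a further split on the relative block indices, one branch of which is killed outright because the form (2) and form (3) side conditions on the $i_t$ are incompatible, with no cross word needed). So while your skeleton is the right one, the proof as proposed has a genuine gap: the decisive lexicographic comparisons are not carried out, and the announced assignment of which cross word to use would fail if executed literally.
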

\pfskip

\begin{proof}  We first  show that there is at most one pair of the form given in Lemma \ref{lem:waystauappear}(2). Suppose  we have two different pairs $(w, y)$, $(w_1, y_1)$ of that form. Note that if $y = y_1$, then by the nature of the reduction that is taking place
in the two products $wy = w_1y_1 \,\, (=\tau )$, we must have $w = w_1$. Hence either $y > y_1$ or $y_1 > y$. If $y >y_1$, then the pair $(w_1, y)$ gives the type I word $w_1y$ in $\mathcal C$ which (after reduction) is bigger than $\tau$. On the other hand, if  $y_1 > y $, then the pair $(w, y_1 )$ gives the type I word $wy_1$ in $\mathcal C$  which (after reduction) is bigger than $\tau$. In either case we get a contradiction. This establishes  that there is at most one pair of form (2) in Lemma \ref{lem:waystauappear}.
\prskip

Next we show that there is at most one pair of the form given in Lemma \ref{lem:waystauappear}(3).  Suppose  we have two different  pairs $(w'q^{i_r-1}, xy')$ and $(w'_1q^{i_s-1}, xy '_1)$ of that form. Without loss of generality, we can  suppose that $s > r$. Then from the condition that $i_t=2$ for all $t > r$, we must have  $y'_1<y'$.  So we arrive at a contradiction after considering the type II word associated with pair $(w'_1q^{i_s-1}, xy' )$, which gives the element in $\mathcal C$
\[
 q^{i_1}x^2 \cdots x^2q^{i_s}x^2q^{i_{r+1}}x^2 \cdots q^{i_n}x^{c} >\tau.
\]
\prskip

Finally assume that we have a pair $(w, y)$ of the form given in Lemma \ref{lem:waystauappear}(2), with corresponding $(a,b)$  satisfying $a+b-1 = i_r$, and a pair $(w_1, y_1) = (w'q^{i_s-1}, xy ')$ of the form given in Lemma \ref{lem:waystauappear}(3). Assume first that $r = s$. The only  way this is possible is to have $b > 2$ and $i_t= 2$ for all $t >r $. In this case, both the type I and type II words arising from the pair $(w_1, y) = (w'q^{i_r-1}, y )$ are bigger than $\tau$ in $\mathcal C$. Therefore $r \neq s$.
\prskip

Suppose now that $r < s$. Then $y = xq^b x^2 q^{i_{r+1}}x^2 \cdots q^{i_n}x^{c}$, with $b \ge 2$,  and either $s < n$ and $y_1 = xq^{i_{s+1}}x^2 \cdots q^{i_n} x^{c}$  and  $i_t = 2$ for all $t > s$,
or $s = n$ and $y_1 = x$. But now the pair $(w_1, y)=(w'q^{i_s-1}, y )$ gives rise to the type II word
\[
 q^{i_1}x^2 \cdots x^2 q^{i_s} x^2 q^b x^2 q^{i_{r+1}}x^2 \cdots q^{i_n} x^{c} > \tau,
\]
a contradiction. Hence we must have $r > s$. Necessarily from the form of (3), we have $i_r = 2$ (and so $b = 2$) and $i_t = 2$ for all $t > r$. This clearly violates the stipulated form in (2). Thus this final case is  not possible either,
which establishes there is at most one pair $(w,y)$ that  produces either (2) or (3).
\end{proof}
\pfskip

We are ready for:
\introskip

\noindent \textbf{\emph{Proof of Theorem}} \ref{T:main}.
\pfskip

\noindent The closing argument of our proof is the most enjoyable part. Suppose $x$ is unit-regular in our ring $S$. By Proposition \ref{P:unit-regular}, there are elements $\alpha, \beta \in S$ of the form described such that
\[
     \alpha \beta \ = \ 1 - xq.
\]
After expanding $\alpha \beta$ as a linear combination of words in $x$ and $q$ (but not necessarily in normal form and allowing for repetition of words), Lemmas \ref{lem:waystauappear} and \ref{lem:lex-order} tell us how the largest (in the lexicographic order) member $\tau$ in the subset $\mathcal{C}$ (of nonzero words from the expansion, expressed in normal form, and beginning in $q$ and ending in $x$) can occur. Inasmuch as $\tau$ definitely resides in $\mathcal{C}$, it must occur in the expansion of $\alpha \beta$ \textbf{at least twice}. Otherwise the linear combinations of $\tau$ could not be zero in the final simplification of $\alpha \beta$ to $1 - xq$ in normal form, which involves \textbf{no} terms from $\mathcal{C}$. Therefore, from Lemma \ref{lem:lex-order}, it must be that $\tau$ occurs at least once as a type I word $\tau = wy$ without reduction.  But now when we form the type II word from the pair $(w,1)$ we have
\[
        wqx \ > \ wy \ = \ \tau
\]
because $y$ begins in $x$ and $q > x$. This contradiction shows $x$ cannot be unit-regular in $S$. Our mission is accomplished. \qed

\bigskip

We close this section by noting that our result implies the non-separativity of $S$.
\introskip

\begin{corollary}
 The ring $S$ is non-separative.
\end{corollary}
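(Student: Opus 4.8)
The plan is to argue the contrapositive at the level of the monoid $V(S)$ of isomorphism classes of finitely generated projective right $S$-modules, where $S$ is separative by definition when $V(S)$ is a separative monoid. I will show that if $S$ were separative, then the nilpotent regular element $x$ would be forced to be unit-regular, contradicting Theorem \ref{T:main}. Set $e = xq$ and $f = qx$. From the preliminaries these are idempotents with $eS = xS$ and $(1-f)S$ the right annihilator of $x$; moreover left multiplication by $x$ and by $q$ give mutually inverse right-module isomorphisms $fS \cong eS$, and we have the two decompositions $S = eS \oplus (1-e)S = fS \oplus (1-f)S$. Crucially, unit-regularity of $x$ is equivalent to $(1-e)S \cong (1-f)S$.

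First I would record the resulting monoid relation. Writing $[M]$ for the class of a finitely generated projective module $M$ in $V(S)$, the two decompositions of $S$ together with $[eS] = [fS]$ give $[(1-e)S] + [eS] = [S] = [(1-f)S] + [fS] = [(1-f)S] + [eS]$. Thus $[(1-e)S]$ and $[(1-f)S]$ become equal after adding $[eS]$. Since Theorem \ref{T:main} asserts that $x$ is \emph{not} unit-regular, we have $(1-e)S \not\cong (1-f)S$, that is $[(1-e)S] \neq [(1-f)S]$. So the situation is precisely one in which a cancellation of the summand $[eS]$ is being blocked, and the task is to show that such blocking is incompatible with separativity.

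Next I would invoke the standard monoid-theoretic characterization of separativity (see the discussion in \cite{A2}): a conical commutative monoid $M$ is separative if and only if, whenever $a + c = b + c$ with $c \lesssim na$ and $c \lesssim nb$ for some positive integer $n$ (here $\lesssim$ denotes isomorphism to a direct summand), one has $a = b$. Applying this with $a = [(1-e)S]$, $b = [(1-f)S]$ and $c = [eS]$ would yield $(1-e)S \cong (1-f)S$, hence unit-regularity of $x$, provided I can verify the subordination conditions $eS \lesssim n\,(1-e)S$ and $eS \lesssim n\,(1-f)S$ for some $n$. This is exactly the step that consumes the nilpotency: the filtration $xS \supseteq x^2S \supseteq x^3S = 0$ should let me bound $[eS]$ by a fixed multiple of the complementary and annihilator classes. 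The guiding model is the rank inequality forced by $x^3 = 0$ in the matrix case, namely $\operatorname{rank}(x) \le 2(\dim \ker x)$, which predicts the explicit bounds $eS \lesssim 2\,(1-e)S$ and $eS \lesssim 2\,(1-f)S$.

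The hard part will be establishing these subordinations inside the concrete ring $S$: I must produce an honest split monomorphism from $eS$ into a finite power of $(1-e)S$, and of $(1-f)S$, constructed from the nilpotent structure of $x$ rather than from a dimension count, since $S$ is infinite-dimensional. A second point requiring care is that $S$ is neither von Neumann regular nor an exchange ring, so the exchange-ring separativity theorems are not available; I must rely on the purely monoid-theoretic characterization above and supply the subordination relations by hand. Once those relations are in place, the cancellation of $[eS]$ is automatic, forcing $(1-e)S \cong (1-f)S$ and hence the unit-regularity of $x$, in direct contradiction with Theorem \ref{T:main}. This contradiction shows that $V(S)$ cannot be separative, i.e. $S$ is non-separative.
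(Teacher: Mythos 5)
Your overall route is the paper's: both arguments reduce non-separativity to the monoid cancellation criterion you quote (this is exactly Lemma 2.1 of \cite{A2}), applied to $a = [(1-xq)S]$, $b = [(1-qx)S]$, $c = [xqS]$, using $xqS \cong qxS$, the two decompositions of $S$, and Theorem \ref{T:main} to get $a + c = b + c$ while $a \neq b$. The one substantive thing left to verify is the pair of subordinations $c \lesssim na$ and $c \lesssim nb$, and that is precisely where your write-up stops: you declare this ``the hard part,'' offer only the heuristic that the filtration $xS \supseteq x^2S \supseteq x^3S = 0$ ``should'' yield the bounds, and produce no split monomorphism. As written, the proof is incomplete at its only nontrivial point.

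The gap is real but small, and the mechanism that closes it is fullness of the idempotents rather than the filtration. Since $x^3 = 0$, the sum $(1-xq) + x(1-xq)q + x^2(1-xq)q^2$ telescopes to $1 - x^3q^3 = 1$, and symmetrically $(1-qx) + q(1-qx)x + q^2(1-qx)x^2 = 1$. Hence $1-xq$ and $1-qx$ each generate $S$ as a two-sided ideal, so every finitely generated projective right $S$-module --- in particular $xqS$ --- is isomorphic to a direct summand of a finite direct sum of copies of $(1-xq)S$, and likewise of $(1-qx)S$: writing $1 = \sum_i a_i(1-xq)b_i$, the map $s \mapsto \left((1-xq)b_1s, \ldots, (1-xq)b_ns\right)$ is a split monomorphism $S \to \left((1-xq)S\right)^n$. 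With these subordinations in hand your argument goes through verbatim. (Your predicted bound $eS \lesssim 2(1-e)S$ from the rank heuristic is neither what one gets nor what one needs; the identity above gives three terms, and any finite bound suffices for the criterion.)
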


\begin{proof}
 Observe that, since $x^3= 0$, we have
 $$(1-xq) +x(1-xq)q+ x^2(1-xq)q^2 = 1 = (1-qx) +q(1-qx)x+ q^2(1-qx)x^2 ,$$
 so that both $1-xq$ and $1-qx$ are full idempotents in $S$ (they generate $S$ as a two-sided ideal). Therefore we have
 $$S = (1-xq) S \oplus xqS = (1-qx)S \oplus qxS $$
 with $xqS\cong qxS$, and $xqS$ is isomorphic to both a direct summand of copies of $(1-xq)S$ and a direct summand of copies of $(1-qx)S$.
 Since $(1-xq)S\ncong (1-qx)S$ by our main result, it follows from \cite[Lemma 2.1]{A2} that $S$ is non-separative.
 \end{proof}
\sectskip

\section{Another approach}
\introskip

Here we unify the Nielsen--\v{S}ter example described in the introduction with our own example. It is always gratifying when two camps have worked quite independently of each other, with different approaches, and yet come up with the same counter-example.
\prskip

\begin{proposition}\label{P:ringS}
Let $R= F\langle a,b\rangle $ be the free $F$-algebra on $a,b$. Let $S = F\langle q,x \mid q=qxq, x=xqx \rangle $.
Let $I = R(1-ba)$ and let $T$ be the subalgebra of $M_2(R)$ given as
\[
 T = \left[\begin{array}{cc}
   R  & I \\
   R & F+I \end{array}\right],
\]
where $F + I$ means $F1 + I$. Then there is a natural isomorphism $S \cong T$ under which
 $q \mapsto Q = {\small \left[\begin{array}{cc}
   b  & 1-ba \\
   0 & 0 \end{array}\right]}$
and $x \mapsto X = {\small \left[\begin{array}{cc}
   a  & 0 \\
   1 & 0 \end{array}\right]}$.
Moreover, the same conclusion holds if for some fixed $n \ge 3$, we impose the extra relation  $x^n = 0$ on $S$ and replace $R$ by $F\langle a, b \mid a^{n-1} = 0 \rangle$.
\end{proposition}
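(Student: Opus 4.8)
The plan is to realise the isomorphism as the map $\phi\colon S\to T$ determined by $q\mapsto Q$, $x\mapsto X$, proving in turn that $\phi$ is a well-defined homomorphism, that it is surjective, and that it is injective. The first point only requires checking the two relations of $S$ on the matrices, and this is pleasant: a direct multiplication gives $QX=\left[\begin{smallmatrix}1&0\\0&0\end{smallmatrix}\right]=e_{11}$ (the $(1,1)$ entry being $ba+(1-ba)=1$), whence $XQX=X\,e_{11}=X$ and $QXQ=e_{11}\,Q=Q$ follow at once. As $X,Q\in T$ and $T$ is a subalgebra, the universal property of $S$ yields an $F$-algebra homomorphism $\phi\colon S\to T$.

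For surjectivity I would show $X,Q$ generate $T$ by exhibiting generators of each matrix corner. From $e_{11}=QX$ (and $1\in T$) one gets $e_{22}=1-QX$ and $e_{21}=e_{22}X$, while $QX^2=\left[\begin{smallmatrix}a&0\\0&0\end{smallmatrix}\right]$ and $Q^2X=\left[\begin{smallmatrix}b&0\\0&0\end{smallmatrix}\right]$ place $a\,e_{11}$ and $b\,e_{11}$ in the image; since $a,b$ generate $R$, the whole corner $R\,e_{11}$ lies in $\langle X,Q\rangle$. Left multiplication by $e_{21}$ carries $R\,e_{11}$ onto $R\,e_{21}$; the element $Q-b\,e_{11}=(1-ba)\,e_{12}$, left-multiplied by $R\,e_{11}$, gives $I\,e_{12}=R(1-ba)\,e_{12}$; and left multiplication of $I\,e_{12}$ by $e_{21}$ gives $I\,e_{22}$, which together with $e_{22}$ yields $(F+I)\,e_{22}$. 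As $T=R\,e_{11}+I\,e_{12}+R\,e_{21}+(F+I)\,e_{22}$, the map $\phi$ is onto.

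Injectivity is the substantive step, and here I would use Bergman's normal form for $S$ (the analogue of Proposition~\ref{P:normal form}, now with powers $x^i$ for all $i\ge 1$): a basis $\mathcal A$ of $S$ is $1$ together with the words alternating in powers of $x$ and $q$, exponent-one blocks only at the ends. The engine is a rank-one factorisation. Writing $X=u\,\epsilon^{\mathsf T}$ and $Q=\epsilon\,\eta^{\mathsf T}$ with $u=\left[\begin{smallmatrix}a\\1\end{smallmatrix}\right]$, $\epsilon=\left[\begin{smallmatrix}1\\0\end{smallmatrix}\right]$ and $\eta=\left[\begin{smallmatrix}b\\1-ba\end{smallmatrix}\right]$, the two interface scalars are $\epsilon^{\mathsf T}\epsilon=1$ and $\eta^{\mathsf T}u=ba+(1-ba)=1$. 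Hence $X^{i}=u\,a^{i-1}\epsilon^{\mathsf T}$, $Q^{j}=\epsilon\,b^{j-1}\eta^{\mathsf T}$, and in any alternating word every internal interface collapses to $1$, so $\phi(w)=s_w\,\mu(w)\,t_w$, where $s_w\in\{u,\epsilon\}$ and $t_w\in\{\epsilon^{\mathsf T},\eta^{\mathsf T}\}$ record the first and last letters of $w$, and $\mu(w)\in R$ is the monomial obtained by replacing each block $x^i$ by $a^{i-1}$ and each block $q^j$ by $b^{j-1}$. Splitting $\mathcal A\setminus\{1\}$ into the four types XX, XQ, QX, QQ by first/last letter, this reads (with $\mu=\mu(w)$)
\begin{gather*}
\text{(XX)}\ \left[\begin{smallmatrix}a\mu&0\\ \mu&0\end{smallmatrix}\right],\qquad
\text{(XQ)}\ \left[\begin{smallmatrix}a\mu b& a\mu(1-ba)\\ \mu b& \mu(1-ba)\end{smallmatrix}\right],\\
\text{(QX)}\ \left[\begin{smallmatrix}\mu&0\\0&0\end{smallmatrix}\right],\qquad
\text{(QQ)}\ \left[\begin{smallmatrix}\mu b&\mu(1-ba)\\0&0\end{smallmatrix}\right],
\end{gather*}
together with $\phi(1)=I$.

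To conclude from $\sum_w c_w\phi(w)=0$ I would read off the four entries in $R=F\langle a,b\rangle$ and eliminate the types in the order XQ, QQ, XX, QX, always using an entry free of the spurious leading $a$. The $(2,2)$-entry involves only XQ words and $\phi(1)$: $\sum_{\mathrm{XQ}}c_w\mu(w)(1-ba)+c_1=0$; since $r\mapsto r(1-ba)$ is injective on $R$ (if $r=rba$ then a degree count forces $r=0$), the elements $\mu(w)(1-ba)$ are independent and $1$ lies outside their span, so the XQ coefficients and $c_1$ vanish. The $(1,2)$-entry then reduces to $\sum_{\mathrm{QQ}}c_w\mu(w)(1-ba)=0$, the $(2,1)$-entry to $\sum_{\mathrm{XX}}c_w\mu(w)=0$, and the $(1,1)$-entry to $\sum_{\mathrm{QX}}c_w\mu(w)=0$; in each, $w\mapsto\mu(w)$ is injective on a fixed type (the block exponents are read back from the run-lengths of $\mu(w)$), and distinct monomials are independent in the free algebra $R$, so all $c_w=0$. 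The main obstacle is precisely this bookkeeping: no single entry separates the types — for instance $\phi(x)$ and $\phi(qx^2)$ share $(1,1)$-entry $a$ — so the proof hinges on selecting, for each type, an entry on which the monomials stay independent, and on the freeness of $R$. Finally, for the ``moreover'' clause I would rerun the whole argument with $x^n=0$ imposed on $S$ and $R$ replaced by $R_n=F\langle a,b\mid a^{n-1}=0\rangle$; the only new checks are that $X^{n}=\left[\begin{smallmatrix}a^{n}&0\\ a^{n-1}&0\end{smallmatrix}\right]=0$ (since $a^{n-1}=0$ forces $a^{n}=0$) and that the independence statements persist — they do, because $a^{n-1}=0$ is homogeneous, so $R_n$ stays graded (keeping the degree argument valid) and the monomials $\mu(w)$ that occur, having all $a$-runs of length $\le n-2$, are exactly the monomial basis of $R_n$. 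At $n=3$ this is the promised identification of $S$ with the Nielsen--\v{S}ter algebra $T$.
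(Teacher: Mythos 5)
Your proof is correct, and although it rests on the same foundation as the paper's --- Bergman's normal form for $S$ and the strategy of showing that the homomorphism carries a basis of $S$ to a linearly independent subset of $T$ --- the execution is genuinely different. The paper organizes the computation by the Peirce decomposition relative to $e=qx\mapsto e_{11}$: it first shows $qxSqx$ is freely generated by $q^2x$ and $qx^2$ and is mapped isomorphically onto $Re_{11}$, then matches corner-adapted bases of the remaining three corners (and offers an alternative finish via primeness of $S$ together with faithfulness on $eSe$). You instead derive a single closed formula $\phi(w)=s_w\,\mu(w)\,t_w$ valid for every normal-form word at once, via the rank-one factorizations $X=u\epsilon^{\mathsf T}$, $Q=\epsilon\eta^{\mathsf T}$ with interface scalars $\epsilon^{\mathsf T}\epsilon=\eta^{\mathsf T}u=1$, and then run a four-step elimination on matrix entries in the order XQ, QQ, XX, QX, using that right multiplication by $1-ba$ is injective on $R$ and that $1\notin I$ (both by the degree argument, which survives in the graded quotient $F\langle a,b\mid a^{n-1}=0\rangle$). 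The two tactical points your write-up correctly isolates and settles are the injectivity of $w\mapsto\mu(w)$ on each first/last-letter type (a run-length decoding, using that exponent-one blocks occur only at the ends) and the choice, for each type, of a matrix entry on which no other surviving type interferes. What your route buys is explicitness and self-containedness --- no primeness lemma and no corner-adapted bases are needed; what the paper's route buys is the structural identification of the corner $qxSqx$ with $R$ itself, which is the conceptual reason the isomorphism exists and which the primeness variant then leverages with almost no further computation.
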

\pfskip

\begin{proof}
Let $\varphi \colon S \to T$ be the homomorphism defined by $\varphi (q) =  Q$ and $\varphi (x) = X$, which is well-defined because $QXQ = Q$ and $XQX= X$. By the argument in Proposition \ref{P:normal form}, Bergman's normal form for $S$ provides a basis consisting of words alternating in powers of $q$ and powers of $x$, but with the restriction that powers to exponent 1 can occur only at the beginning or end. From this we see that  $qxSqx$ is freely generated by the elements $q^2x$ and $qx^2$ because $q^ix^j = (q^2x)^{i-1}(qx^2)^{j-1}$ and $qxSqx$ has a basis consisting of $qx$ (its identity) and all words in normal form that begin in $q$ and end in $x$. Since
\[
  \varphi(qx^2) = \left[\begin{array}{cc}
   a  & 0 \\
   0 & 0 \end{array}\right], \ \
 \varphi(q^2x) =   \left[\begin{array}{cc}
   b  & 0 \\
   0 & 0 \end{array}\right]
\]
it follows that $\varphi$  induces an isomorphism from $qxSqx$ onto
$ {\small \left[\begin{array}{cc}
   R  & 0 \\
   0 & 0 \end{array}\right]}$.
\prskip

It is easily checked that an $F$-basis for $(1-qx)S(1-qx)$ is given by
 $$\{ 1-qx \} \cup \{(1-qx) x^{j_0}(q^2x)^{i_1}(qx^2)^{j_1}\cdots (qx^2)^{j_{n-1}} q^{i_n} (1-qx) \} ,$$
 where $j_0,i_1,j_1,\dots , i_n \ge 1 $. The image by $\varphi$ of this basis is
 $$\{ e_{22} \} \cup \{ e_{22} a^{j_0-1} b^{i_1}a^{j_1} \cdots a^{j_{n-1}} b^{i_n -1} (1-ba) \} ,$$
 which is an $F$-basis of $e_{22} T e_{22}$. (Here,  $e_{ij}$ denote the usual matrix units in $T$.)
 Similar arguments show that a basis of $(1-qx) S qx $ is mapped onto a basis of $e_{22}Te_{11}$ and a basis of
 $qxS(1-qx)$ is mapped onto a basis of $e_{11}Te_{22}$. Thus $\varphi$ is an isomorphism.
 \prskip

 Alternatively, having verified (as in the first paragraph) that $\varphi$ induces an isomorphism of $eSe$ onto $fTf$, where $e, f$ are the idempotents $qx, e_{11}$ in $S$ and $T$ respectively, we could complete the proof as follows. Firstly, we observe that $S$ is a prime ring. Note that for any nonzero $z \in S$, either $qz$ or $xz$ is nonzero.  For if $qz = 0$ there must be reduction involved with products of $q$ and all words in $z$ of greatest length,  so all such words must begin in $x$.  And if $xz = 0$ also, they must all begin in $q$, contradiction. Hence for $z \neq 0$, either $qz \neq 0$ or $qxz \neq 0$. Similarly for $0 \neq y \in S$, either $yq \neq 0$ or $yxq \neq 0$. Hence in showing $ySz \neq 0$, we can assume $y$ is a left multiple of $q$ and $z$ is a right multiple of $q$. But now $yz \neq 0$ because there is no reduction involved in multiplying a basis word in $y$ of greatest length with one in $z$ of greatest length. Thus $S$ is prime.   If $K = \ker \varphi \neq 0$, primeness of $S$ gives $Ke \neq 0$, whence $Ke \subseteq (1-e)Se$ because $\varphi$ is faithful on $eSe$. This makes the left ideal $Ke$ nilpotent, contradiction. Hence $K = 0$. Also $\varphi$ is onto because in addition to $fTf \subseteq \varphi(S)$, we have that
 \begin{align}
   &fT(1-f) = fTfQ(1-f), \ (1-f)Tf = (1-f)XfTf, \notag \\
   &(1-f)T(1-f) = (1-f)XfT(1-f) + (1 - f)F \notag
 \end{align}
 are all in the image of $\varphi$.
 \prskip

 If we impose $x^n = 0$, the basis words in the  normal form for $S$ have the extra restriction that the only  powers of $x$ allowed are $x^i$ for $i = 1,2,\ldots, n-1$. Just the obvious extension of Proposition \ref{P:normal form}. And for words in $a,b$, when we impose $a^{n-1} = 0$, powers of $a$ allowed are $a, a^2,\ldots, a^{n-2}$. It is easy to show that when $n \ge 3$, the same $\varphi$ sends the basis for $S$ to the basis for $T$, so our first proof also works here. Alternatively, we can check primeness of $S$ when $n \ge 3$. For any nonzero $z$ with $qz = xz = 0$, we deduce from $qz = 0$ that all the greatest length terms in $z$ must begin in $x$. Then $xz = 0$ shows these terms begin in $x^{n-1}$. But now since $n \ge 3$  there is no reduction in left multiplying such words by $q$, contradicting $qz = 0$. Hence $qz = xz = 0$ implies $z = 0$. Similarly, $yq = yx = 0$ implies $y = 0$. Thus $S$ is prime and the statements in the proposition remain true when $x^n = 0$ and $n \ge 3$.
\end{proof}
\prskip

\begin{remark}\label{R:n=2}
As it stands, Proposition \ref{P:ringS} fails for $n = 2$. The homomorphism $\varphi$ is still onto but has a nonzero kernel because $\varphi(1 - qx - xq + xq^2x) = I - QX - XQ + XQ^2X = 0$. 
Our proof fails because $S$ is no longer prime {\rm(}$1 - qx - xq + xq^2x$ is a central idempotent{\rm)}. Note when $n = 2$, 
we have $a = 0$ and $R$ is the polynomial algebra $F[b]$. The problem with $a = 0$ is that $1$ and $1 - ba$ become the same. 
However, if we set $T' = M_2(F[b]) \times F$, we can show $S \cong T'$ via the {\rm(}unital{\rm)} mapping that sends
\[
   q \mapsto Q = \left({\Small \left[\begin{array}{cc}
   b  & 1 \\
   0 & 0 \end{array}\right]}, \,0 \right)\ \ \ \mbox{and} \ \ \ 
    x \mapsto X = \left({\Small \left[\begin{array}{cc}
   0 & 0 \\
   1 & 0 \end{array}\right]}, \,0\right).
\]   
\end{remark}
\prskip

 From now on, we set $R = F\langle a,b \mid a^2 = 0 \rangle $. A basis for $R$ is the set $\mathcal B= \{ a^{i_0}b^{i_1}ab^{i_2}a\cdots b^{i_{r-1}}a^{i_r} \}$,
 where $i_0,i_r \in  \{ 0,1 \}$, $r \ge 0$ and $i_1,\dots ,i_{r-1}\ge 1$.
A product of two basis elements $\alpha \beta$ is a basis element (without reduction) or $0$, and it is zero
 if and only if $\alpha $ ends in $a$ and $\beta $ starts in $a$. Observe that $b$ is a non-zero-divisor in $R$.
\prskip

\begin{theorem}\label{thm:main}
Let $T$ be the algebra in \ref{P:ringS} for the choice of $n = 3$. Then $X$ is a regular nilpotent element which is not unit-regular in $T$.
\end{theorem}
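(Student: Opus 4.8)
The plan is to obtain Theorem~\ref{thm:main} by transporting the main result of Section~2 across the isomorphism of Proposition~\ref{P:ringS}. First I would observe that the algebra $S$ appearing in Proposition~\ref{P:ringS} once the relation $x^3=0$ is imposed (and $R$ is taken to be $F\langle a,b\mid a^2=0\rangle$) is literally the same $F$-algebra $S=F\langle x,q\mid x^3=0,\ xqx=x,\ qxq=q\rangle$ that occurs in Theorem~\ref{T:main}: there $S=R\langle q\mid xqx=x,\ qxq=q\rangle$ with $R=F[x]/(x^3)$, which is exactly this presentation. Hence Proposition~\ref{P:ringS} with $n=3$ supplies an $F$-algebra isomorphism $\varphi\colon S\xrightarrow{\ \sim\ }T$ with $\varphi(x)=X$ and $\varphi(q)=Q$.

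It then remains only to check that $\varphi$ carries each of the three relevant properties of $x$ to the corresponding property of $X$. Nilpotency is immediate, since $\varphi(x^3)=X^3=0$; a one-line computation using $a^2=0$ confirms $X^2\neq 0$, so $X$ has index $3$. Regularity is also immediate: from $xqx=x$ we get $XQX=X$, so $Q$ is an inner inverse of $X$ (indeed a generalised inverse, since $QXQ=Q$ as well). For the crucial point, a ring isomorphism carries the group of units onto the group of units; so if $X=XUX$ for some unit $U\in T$, then $x=x\,\varphi^{-1}(U)\,x$ with $\varphi^{-1}(U)$ a unit of $S$, making $x$ unit-regular in $S$ and contradicting Theorem~\ref{T:main}. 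Therefore $X$ is a regular nilpotent element of $T$ that is not unit-regular. On this route there is essentially no obstacle: the only points requiring care are the identification of the two presentations of $S$ and the (standard) fact that unit-regularity is an isomorphism invariant.

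Alternatively, and in the spirit of the Nielsen--\v{S}ter argument, one can argue directly inside $T$ without invoking Section~2. Here one computes $f=QX=e_{11}$, so that $(1-f)T=e_{22}T$ is the right annihilator of $X$, together with $e=XQ=\left[\begin{array}{cc} ab & a-aba \\ b & 1-ba \end{array}\right]$. By the equivalent formulation of unit-regularity recorded in the Preliminaries, $X$ would be unit-regular precisely when there exist $c\in(1-e)T(1-f)$ and $d\in(1-f)T(1-e)$ with $cd=1-e$ and $dc=1-f$. Using the explicit $F$-basis of $R=F\langle a,b\mid a^2=0\rangle$ fixed just above, one would expand such $c,d$ in the corners $e_{22}T(1-e)$ and $(1-e)Te_{22}$ and derive a contradiction by a leading-term count that tracks the factor $1-ba$. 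On this second route the main obstacle is precisely this corner computation, and Remark~\ref{R:n=2} pinpoints why it must use $n\ge 3$: the whole argument hinges on $1$ and $1-ba$ being distinct, which fails exactly when $a=0$.
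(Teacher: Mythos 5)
Your first route is correct as far as it goes: the algebra $S$ of Proposition \ref{P:ringS} with the relation $x^3=0$ imposed is indeed the same presentation as the $S=R\langle q\mid xqx=x,\ qxq=q\rangle$, $R=F[x]/(x^3)$, of Theorem \ref{T:main}; the isomorphism $\varphi$ of Proposition \ref{P:ringS} sends $x\mapsto X$, $q\mapsto Q$; and regularity, index-$3$ nilpotency, and (non-)unit-regularity all transport across a unital isomorphism. So Theorem \ref{thm:main} does follow from Theorem \ref{T:main} plus Proposition \ref{P:ringS}. But this is genuinely not the paper's proof, and it works against the point of Section~3. The paper proves Theorem \ref{thm:main} by a short, self-contained computation inside $T$: assuming $(1-XQ)T\cong(1-QX)T$, it writes the witnessing matrices explicitly, extracts the equations $zu+tv=1$, $vz=-b$, $vt=ba$, uses $v\in F+I$ together with $b$ being a non-zero-divisor in $R=F\langle a,b\mid a^2=0\rangle$ to conclude that $c(1-ba)d=1$ would be solvable in $R$, and refutes that via the representation $a\mapsto e_{21}$, $b\mapsto e_{12}$ into $M_2(F)$ (under which $1-ba\mapsto e_{22}$, so a determinant comparison finishes). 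That argument uses neither Bergman's normal form nor any of Section~2, which is precisely what allows the Corollary that follows it to re-derive the main result by an independent route; your derivation consumes Theorem \ref{T:main} as input and would render that Corollary circular, though it remains a valid proof of the theorem as literally stated. Your second, ``direct'' sketch is closer in spirit to what the paper actually does, but it stops short at the decisive step: a ``leading-term count that tracks the factor $1-ba$'' is not carried out, and the paper's actual mechanism is not a leading-term argument but the reduction to the unsolvability of $c(1-ba)d=1$ in $R$, settled by a two-dimensional representation. If you intend the second route to stand on its own, that corner computation must be supplied.
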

\pfskip

\begin{proof}
Suppose $X$ is unit-regular in $T$. Then $(1 - XQ)T \cong (1 - QX)T$ and therefore there exist $u \in I, t,v \in F + I, z\in R$ such that
\[
\left[\begin{array}{cc}
    0  & 0   \\
    z  & t
   \end{array}\right]
\left[\begin{array}{cc}
   0 & u   \\
   0 & v
   \end{array}\right]  =
\left[\begin{array}{cc}
    0  & 0   \\
    0  & 1
   \end{array}\right], \ \
\left[\begin{array}{cc}
   0   &  u  \\
   0   &  v
   \end{array}\right]
\left[\begin{array}{cc}
   0   & 0   \\
   z   & t
   \end{array}\right] =
\left[\begin{array}{cc}
 1 - ab & -a(1 - ba)   \\
   -b   &  ba
   \end{array}\right].
\]
In particular, we have $zu+tv= 1$, $vz= -b$, $vt= ba$. Hence
\[
v= (vz)u+ (vt)v= bv_1
\]
for some $v_1\in R$ and, since $v\in F+I$, we conclude that $v\in I=R(1-ba)$. Therefore $v_1= v_2(1-ba)$ for some $v_2\in R$.  Inasmuch as $b$ is a non-zero-divisor in $R$, from $-b= vz= bv_2(1-ba)z$ we deduce that the equation $c(1-ba)d=1$ has a solution $c,d\in R$. Consider the homomorphism $\pi : R \longrightarrow M_2(F)$ obtained by mapping
$a \mapsto {\Small
\left[\begin{array}{cc}
       0 & 0 \\
       1 & 0
       \end{array}\right]}$
and
$b \mapsto {\Small
\left[\begin{array}{cc}
       0 & 1 \\
       0 & 0
       \end{array}\right]}$.
From $\pi(c)\pi(1 - ba)\pi(d) = \pi(1)$ this implies the equation
$C {\Small
\left[\begin{array}{cc}
       0 & 0 \\
       0 & 1
       \end{array}\right]}D = I$
has a solution $C,D \in M_2(F)$. We have reached a desired contradiction (look at the determinant of each side). This completes the proof.
\end{proof}
\prskip

\begin{corollary}
Let $S = F\langle q, x \, | \, x^3 = 0, xqx = x, qxq = q \rangle$. Then $x$ is a regular nilpotent element of $S$ which is not unit-regular.
\end{corollary}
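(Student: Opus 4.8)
The plan is to read this Corollary off from the work already done in Section~3 rather than to reprove anything from scratch. First I would observe that the algebra $S = F\langle q, x \mid x^3 = 0,\, xqx = x,\, qxq = q \rangle$ in the statement is precisely the algebra $S$ of Proposition~\ref{P:ringS} subject to the extra relation $x^n = 0$ in the case $n = 3$; equivalently, it is the algebra of Theorem~\ref{T:main}, since $F[x]/(x^3)$ freely extended by a generalised inverse $q$ of $x$ has exactly this presentation. Thus there is nothing to construct: the object is already in hand, and the task reduces to transporting known facts along an isomorphism.

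Next I would dispatch the two easy assertions directly from the defining relations. The relation $x^3 = 0$ shows that $x$ is nilpotent (of index $3$, since $x^2 \neq 0$ by the normal form of Proposition~\ref{P:normal form}), and the relation $xqx = x$ exhibits $q$ as an inner inverse of $x$, so $x$ is regular. Both properties are intrinsic to the presentation and require no appeal to the matrix model $T$.

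For the non-unit-regularity, which is the only substantive point, I would invoke Proposition~\ref{P:ringS} in the case $n = 3$ to obtain the isomorphism $\varphi \colon S \to T$ carrying $x$ to $X$ and $q$ to $Q$, and then apply Theorem~\ref{thm:main}, which asserts exactly that $X$ is a regular nilpotent element that is not unit-regular in $T$. Since unit-regularity is preserved under ring isomorphisms and $\varphi(x) = X$, it follows that $x$ is not unit-regular in $S$, which completes the argument.

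I expect no genuine obstacle here: the Corollary is essentially a transport-of-structure restatement of Theorem~\ref{thm:main} (and it coincides with Theorem~\ref{T:main}). The only point deserving a line of care is the bookkeeping identification of the presented algebra $S$ with the $n = 3$ instance of Proposition~\ref{P:ringS}, so that Theorem~\ref{thm:main} applies verbatim; once that identification is noted, the conclusion is immediate.
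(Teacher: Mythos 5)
Your proposal is correct and matches the paper's own argument, which reads simply ``Apply the isomorphism in Proposition \ref{P:ringS}'': both transport the non-unit-regularity of $X$ in $T$ (Theorem \ref{thm:main}) back to $x$ in $S$ via the $n=3$ isomorphism, with nilpotency and regularity of $x$ being immediate from the presentation. Your extra bookkeeping about identifying the presented algebra with the $n=3$ instance is a reasonable elaboration of the same one-line proof.
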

\pfskip

\begin{proof} Apply the isomorphism in Proposition \ref{P:ringS}.
\end{proof}

\bigskip

\section{Acknowledgments}
\prskip

We are indebted to George Bergman for his many helpful comments and suggestions, and his generosity in revising an earlier version of \cite{B1} to make the proof of Proposition \ref{P:normal form} more accessible to our reader.   We also thank T. Y. Lam for helpful discussions and friendly correspondence.

\sectskip

\end{document}